\date{}
\newtheorem{Theorem}{Theorem}[section]
\newtheorem{Proposition}[Theorem]{Proposition}
\newtheorem{Lemma}[Theorem]{Lemma}
\numberwithin{equation}{section}
\title{An invariance group for a linear combination
of two Saalsch\"utzian
${}_4F_3(1)$
hypergeometric series}
\author{Ilia D. Mishev
\footnote{Department of
Mathematics, University of Colorado at Boulder,
Campus Box 395, Boulder, CO 80309-0395,
U.S.A.
E-mail address: ilia.mishev@colorado.edu}}
\begin{document}

\maketitle

\begin{abstract}

We explore a function
$L(\vec{x})=L(a,b,c,d;e;f,g)$ which is a
linear combination of two Saalsch\"utzian
${}_4F_3(1)$
hypergeometric series. 
We demonstrate a fundamental two-term
relation satisfied by the $L$ function
and show that 
the fundamental two-term relation 
implies that the Coxeter 
group $W(D_5)$, 
which has 1920 elements, is an
invariance group for $L(\vec{x})$.
The invariance relations
for $L(\vec{x})$ 
are 
classified into six types based on a
double coset decomposition of the
invariance group. 
The fundamental two-term relation
is shown to generalize classical
results about hypergeometric series.
We derive
Thomae's identity for
${}_3F_2(1)$ series, Bailey's identity
for terminating Saalsch\"utzian
${}_4F_3(1)$ series, and Barnes' second
lemma as consequences
of the fundamental two-term relation.

\end{abstract}

\section{Introduction}

Invariance groups for hypergeometric series have
been studied extensively in the past. A hypergeometric 
series is trivially invariant under permutations
of its numerator and denominator parameters thus
giving us an invariance group isomorphic
to the cross product of two symmetric groups. 
The existence of nontrivial two-term relations 
and their combined use with
the trivial relations
leads to larger invariance
groups that have been the subject of study
over the last twenty-five years by
Beyer et al.\ \cite{BLS}, 
Srinivasa Rao et al.\ \cite{Rao}, and others.

The series of type ${}_3F_2(1)$ 
have been studied since
the nineteenth century. In 1879 Thomae
\cite{T} obtained a number of 
two-term relations for ${}_3F_2(1)$
series. One of those relations is known
today as Thomae's identity 
(see \cite[p.\ 14]{Ba}). 
Thomae's identity was later rediscovered 
(with an explicit proof provided)
by Ramanujan
(see \cite[p.\ 104]{Har}).
In 1923
Whipple \cite{Wh1} re-visited
Thomae's work and introduced 
a more
convenient notation, in terms of his 
Whipple parameters,
that indexed the
two-term relations
found by Thomae.
In a recent paper Krattenthaler and
Rivoal 
\cite{KR}
described other families of
two-term relations for ${}_3F_2(1)$
series that are not
consequences of the identities
found by Thomae.

A two-term relation for 
terminating Saalsch\"utzian
${}_4F_3(1)$ series, 
based on identities relating 
very-well-poised ${}_7F_6(1)$
series to terminating
Saalsch\"utzian ${}_4F_3(1)$ series,
was given
by Whipple \cite[Eq.\ (10.11)]{Wh3}
in 1925.
The same two-term relation 
appeared later in
Bailey's monograph
\cite[p.\ 56]{Ba} and is
often referred to today as Bailey's identity.

The first mention of an invariance
group for hypergeometric series seems
to be due to Hardy. In \cite[p.\ 111]{Har}
it is implied that 
the symmetric group $S_5$ is an invariance
group for the ${}_3F_2(1)$ series.
In 1987 Beyer et al.\ 
\cite{BLS} rediscovered that
Thomae's identity combined with the trivial
invariances under permutations of the
numerator and denominator parameters
implies that $S_5$ is an invariance group
for the ${}_3F_2(1)$ series. Beyer et al.\
also showed in the same paper 
\cite{BLS} that 
Bailey's identity combined with the 
trivial invariances
implies that the symmetric group $S_6$ is an
invariance group for the terminating
Saalsch\"utzian ${}_4F_3(1)$ series.

The goal of this paper is to extend the results 
stated 
above to Saalsch\"utzian ${}_4F_3(1)$ series.
We examine a function
$L(a,b,c,d;e;f,g)$ 
(see (\ref{220}) for the definition)
which is a linear combination
of two Saalsch\"utzian ${}_4F_3(1)$ series.
This particular linear combination
of two Saalsch\"utzian ${}_4F_3(1)$ series
appears in \cite{St2} in the evaluation
of the Mellin transform of a spherical
principal series $GL(4,\mathbb{R})$
Whittaker function.

In Section 3 we derive a 
fundamental two-term relation 
(see (\ref{340})) satisfied by
$L(a,b,c,d;e;f,g)$. 
The fundamental two-term relation (\ref{340})
is derived through a Barnes integral
representation of $L(a,b,c,d;e;f,g)$ and
generalizes both Thomae's and Bailey's
identities in the sense that the latter
two identities can be obtained
as limiting cases of our
fundamental two-term relation (see Section 5).  

In Section 4 we show that 
the two-term relation (\ref{340})
combined with the trivial invariances
of $L(a,b,c,d;e;f,g)$ under permutations
of $a,b,c,d$ and interchanging
$f,g$ implies that
the function $L(a,b,c,d;e;f,g)$ has an
invariance group $G_L$ isomorphic
to the Coxeter group $W(D_5)$,
which is of order 1920.
(See \cite{H} for general information
on Coxeter groups.)
The invariance group $G_L$ is 
given as a matrix
group of transformations of the affine
hyperplane
\begin{equation}
\label{110}
V=\{(a,b,c,d,e,f,g)^T \in
\mathbb{C}^7:
e+f+g-a-b-c-d=1\}.
\end{equation}
The 1920 invariances
of the $L$ function that follow 
from the invariance group $G_L$ are 
classified into six types based
on a double coset decomposition of $G_L$
with respect to its subgroup $\Sigma$ 
consisting of all the permutation matrices
in $G_L$. To the best of the author's
knowledge, using such a double coset
decomposition is a new way of describing 
all the relations 
induced by an invariance group
and does not have an analog in the literature
before.  

Some consequences of the fundamental
two-term relation (\ref{340}) are
shown in Section 5. In particular,
as already mentioned, we show that
Thomae's and Bailey's identities
follow as limiting cases of 
(\ref{340}). We also show that
Barnes' second lemma 
(see \cite{Bar2} or 
\cite[p.\ 42]{Ba})
follows as
a special case of (\ref{340})
when we take $d=g$.

Versions of the $L$ function 
(in terms of very-well-poised
${}_7F_6(1)$ series, see (\ref{230}))
were examined in the past
by Bailey \cite{Ba1},
Whipple \cite{Wh2}, 
and Raynal \cite{R}. 
Bailey obtained
two-term relations 
that were later re-visited by
Whipple and Raynal. However,
there is no mention
of an underlying invariance
group.
  
A basic hypergeometric series
analog of the $L$ function (in terms of
${}_8\phi_7$ series) was studied by 
Van der Jeugt and Srinivasa Rao \cite{V}.
The authors establish an invariance group 
isomorphic to $W(D_5)$
for the ${}_8\phi_7$ series, 
but do not classify all two-term
relations, or consider how they 
could imply results about
lower-order series.

Very recently
Formichella et al.\ \cite{FGS} explored
a function $K(a;b,c,d;e,f,g)$ which is
a different linear combination 
of two Saalsch\"utzian
${}_4F_3(1)$ series from 
the function $L(a,b,c,d;e;f,g)$. 
The linear combination
of two Saalsch\"utzian
${}_4F_3(1)$ series studied by
Formichella et al.\ appears in the
theory of archimedian zeta integrals
for automorphic $L$ functions 
(see \cite{St4,ST}).
The function
$K(a;b,c,d;e,f,g)$ behaves very differently
from $L(a,b,c,d;e;f,g)$. 
Formichella et al.\ obtain in \cite{FGS}
a two-term relation
satisfied by $K(a;b,c,d;e,f,g)$ and show that
their two-term relation implies
that the symmetric group $S_6$ is an
invariance group for $K(a;b,c,d;e,f,g)$.
In a future work by the author of the
present paper and by Green and Stade,
the connection between the $K$ and the $L$
functions will be studied. 

{\it Acknowledgments.} This paper is based on results
obtained in the author's Ph.D. thesis (see \cite{M})
at the University of Colorado at Boulder. The author
would like to acknowledge the guidance of his advisor 
Eric Stade as well as the discussions with R.M. Green from
the University of Colorado at Boulder and 
Robert S. Maier from
the University of Arizona.
  
\section{Hypergeometric series and Barnes integrals}

The hypergeometric series of type
${}_{p+1}F_p$ is the power series
in the complex variable $z$ defined by
\begin{equation}
\label{210}
{}_{p+1}F_p\left[
{\displaystyle a_1,a_2,\ldots, a_{p+1};
\atop
\displaystyle b_1,b_2,\ldots, b_p;}z\right]=
\sum_{n=0}^{\infty}\frac
{(a_1)_n(a_2)_n\cdots (a_{p+1})_n}
{n!(b_1)_n(b_2)_n\cdots (b_p)_n}z^n,
\end{equation}
where $p$ is a positive integer,
the numerator parameters 
$a_1,a_2,\ldots, a_{p+1}$ and the
denominator parameters
$b_1,b_2,\ldots, b_p$ are complex numbers,
and the rising factorial 
$(a)_n$ is given by
\begin{equation*}
(a)_n=\left\{
\begin{array}{rl}
a(a+1)\cdots(a+n-1)=
\frac{\Gamma(a+n)}{\Gamma(a)}, & n>0,\\
1, & n=0.
\end{array} \right.
\end{equation*}

The series in (\ref{210}) converges absolutely
if $|z|<1$. When $|z|=1$, the series converges
absolutely if $\mbox{Re}(\sum_{i=1}^{p} b_i - 
\sum_{i=1}^{p+1} a_i) > 0$ (see \cite[p.\ 8]{Ba}).
We assume that no denominator parameter is
a negative integer or zero. If a numerator 
parameter is a negative integer or zero, the
series has only finitely many nonzero terms and
is said to terminate. 

When $z=1$, the series is
said to be of unit argument and of type
${}_{p+1}F_p(1)$. If
$\sum_{i=1}^{p} b_i = 
\sum_{i=1}^{p+1} a_i + 1$, the series is called 
Saalsch\"utzian.
If $1+a_1=b_1+a_2= \ldots = b_p+a_{p+1}$,
the series is called well-poised. A well-poised
series that satisfies $a_2=1+\frac{1}{2}a_1$ is
called very-well-poised.

Our main object of study in this paper will be 
the function $L(a,b,c,d;e;f,g)$ defined by
\begin{eqnarray}
\label{220}
&&L(a,b,c,d;e;f,g) \nonumber \\
&&=\frac{{}_4F_3\left[
{\displaystyle a,b,c,d;
\atop
\displaystyle e,f,g;}1\right]}{\sin \pi e\ \Gamma(e)
\Gamma(f)\Gamma(g)\Gamma(1+a-e)\Gamma(1+b-e)\Gamma(1+c-e)
\Gamma(1+d-e)} \nonumber \\
&&-\frac{{}_4F_3\left[
{\displaystyle 1+a-e,1+b-e,1+c-e,1+d-e;
\atop
\displaystyle 1+f-e,1+g-e,2-e;}1\right]}
{\sin \pi e\ \Gamma(a)
\Gamma(b)\Gamma(c)\Gamma(d)\Gamma(1+f-e)\Gamma(1+g-e)
\Gamma(2-e)},
\end{eqnarray}
where $a,b,c,d,e,f,g \in \mathbb{C}$ satisfy
$e+f+g-a-b-c-d=1$.

The function $L(a,b,c,d;e;f,g)$ is a linear combination
of two Saalsch\"utzian ${}_4F_3(1)$ series.
Other notations we will use for 
$L(a,b,c,d;e;f,g)$ are
$L\left[{\displaystyle a,b,c,d; \atop
\displaystyle e;f,g}\right]$ and
$L(\vec{x})$, where 
we will always have
$\vec{x}=(a,b,c,d,e,f,g)^{T} \in V$
(see (\ref{110})).

It should be noted that by 
\cite[Eq.\ (7.5.3)]{Ba},
the L function can be expressed as
a very-well-poised 
${}_7F_6(1)$ series:
\begin{align}
\label{230}
&L(a,b,c,d;e;f,g) \nonumber\\
{}\\
&=\frac{\Gamma(1+d+g-e)}
{\pi \Gamma(g)\Gamma(1+g-e)
\Gamma(f-d)\Gamma(1+a+d-e)
\Gamma(1+b+d-e)\Gamma(1+c+d-e)}
\nonumber\\
&\cdot {}_7F_6\left[
{\displaystyle d+g-e,
1+\frac{1}{2}(d+g-e),g-a,g-b,g-c,
d,1+d-e;
\atop
\displaystyle \frac{1}{2}(d+g-e),
1+a+d-e,1+b+d-e,1+c+d-e,1+g-e,g;}
1\right],\nonumber
\end{align}
provided that $\mbox{Re}(f-d)>0$.
Therefore our results on the $L$
function can also be interpreted in terms
of the very-well-poised
${}_7F_6(1)$ series given in
(\ref{230}). 

Fundamental to the derivation
of a nontrivial two-term relation 
for the
$L$ function will be the notion
of a Barnes integral, which
is a contour integral of the
form
\begin{equation}
\label{240}
\int_t \prod_{i=1}^{n}\Gamma^{\epsilon_i}(a_i+t)
\prod_{j=1}^{m}\Gamma^{\epsilon_j}(b_j-t) \,dt,
\end{equation}
where $n,m \in \mathbb{Z}^{+};
\epsilon_i,\epsilon_j = \pm 1;$
and $a_i,b_j,t \in \mathbb{C}$.
The path of integration is
the imaginary axis,
indented if necessary, so that any poles
of $\prod_{i=1}^{n}\Gamma^{\epsilon_i}(a_i+t)$
are to the left of the contour and
any poles of
$\prod_{j=1}^{m}\Gamma^{\epsilon_j}(b_j-t)$ are to
the right of the contour. This path of integration
always exists, provided that, for
$1 \leq i \leq n$ and $1 \leq j \leq m$, we
have $a_i+b_j \notin \mathbb{Z}$ whenever
$\epsilon_i=\epsilon_j=1$. 

From now on, when we write an integral
of the form (\ref{240}),
we will always
mean a Barnes integral with a path of integration
as just described.

A Barnes integral can often be evaluated in terms of
hypergeometric series using the
Residue Theorem,
provided that we can establish the
necessary convergence arguments.
This is the approach we
take in the next section. 
We will make use of the extension of
Stirling's formula to the complex numbers 
(see \cite[Section 4.42]{Titch} or \cite[Section 13.6]{WW}):
\begin{equation}
\label{250} \Gamma (a+z) = \sqrt{2\pi} z^{a+z-1/2}e^{-z} (1+
\mbox{O} (1/|z|))
\mbox{ uniformly as } |z| \to \infty,
\end{equation}
provided that 
$-\pi +\delta \leq \arg (z) \leq \pi -
\delta, \; \delta \in (0,\pi)$.

When applying the Residue Theorem,
we will use 
the fact
that the gamma 
function has simple poles at
$t=-n, n=0,1,2,\dots$, with
\begin{equation}
\label{260}
\mbox{Res}_{t=-n}\Gamma(t)=\frac{(-1)^n}{n!}.
\end{equation}

When simplifying expressions
involving gamma functions, 
the reflection formula for the
gamma function will
often be
used:
\begin{equation}
\label{270}
\Gamma(t)\Gamma(1-t)=
\frac{\pi}{\sin \pi t}.
\end{equation}

Finally, we will use
a result about Barnes integrals
known as Barnes' lemma
(see \cite{Bar1} or \cite[p.\ 6]{Ba}):

\begin{Lemma}[Barnes' lemma]
\label{BL}

If
$\alpha,\beta,\gamma,\delta \in \mathbb{C}$,
we have
\begin{eqnarray}
\label{290}
\frac{1}{2 \pi i} \int_t
\Gamma(\alpha+t) \Gamma(\beta+t)
\Gamma(\gamma-t) \Gamma(\delta-t) \,dt
\nonumber\\
=\frac{\Gamma(\alpha+\gamma)\Gamma(\alpha+\delta)
\Gamma(\beta+\gamma)\Gamma(\beta+\delta)}
{\Gamma(\alpha+\beta+\gamma+\delta)},
\end{eqnarray}
provided that none of 
$\alpha+\gamma,\alpha+\delta,
\beta+\gamma$ and $\beta+\delta$ is an
integer.

\end{Lemma}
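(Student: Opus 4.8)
The plan is to prove Barnes' lemma by the classical contour-shifting and residue-summation argument, evaluating the integral in two complementary ways and matching the results against a known ${}_2F_1(1)$ evaluation (the Gauss summation theorem). First I would set up the contour as prescribed for a Barnes integral: the path runs up the imaginary axis, indented so that the poles of $\Gamma(\alpha+t)$ and $\Gamma(\beta+t)$ (lying at $t=-\alpha-n$ and $t=-\beta-n$ for $n\ge 0$) are all to the left, while the poles of $\Gamma(\gamma-t)$ and $\Gamma(\delta-t)$ (at $t=\gamma+n$ and $t=\delta+n$) are all to the right. The hypothesis that none of $\alpha+\gamma,\alpha+\delta,\beta+\gamma,\beta+\delta$ is an integer guarantees these two pole sets are disjoint so such a contour exists.

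Next I would close the contour to the right with a large semicircle (or a sequence of expanding rectangular contours $\mathcal{C}_N$ whose right edge passes between consecutive poles), and estimate the integrand on the added arc using Stirling's formula (\ref{250}). The product of four gamma functions, two with $+t$ and two with $-t$, behaves like $|t|^{\alpha+\beta+\gamma+\delta-2}$ times exponentially decaying factors coming from the $e^{-z}$ terms and the reflection-formula sines; the key point is that the $e^{\pm t}$ contributions cancel in pairs so the integrand decays fast enough on the right half-plane for the arc contribution to vanish as $N\to\infty$, at least when $\mathrm{Re}(\alpha+\beta+\gamma+\delta)$ is suitably restricted (the general case then follows by analytic continuation in the parameters). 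By the Residue Theorem the integral therefore equals $-2\pi i$ times the sum of residues at the right-hand poles $t=\gamma+n$ and $t=\delta+n$, $n\ge 0$.

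Then I would compute those residues using (\ref{260}): at $t=\gamma+n$ one gets $\mathrm{Res}\,\Gamma(\gamma-t)=-(-1)^n/n!$ (the sign from the $-t$), times $\Gamma(\alpha+\gamma+n)\Gamma(\beta+\gamma+n)\Gamma(\delta-\gamma-n)$. Using $(x)_n=\Gamma(x+n)/\Gamma(x)$ and the reflection formula (\ref{270}) to rewrite $\Gamma(\delta-\gamma-n)$ in terms of $\Gamma(\delta-\gamma)$ and a Pochhammer symbol $(1-\delta+\gamma)_n$, the sum over $n$ of the residues at $t=\gamma+n$ collapses into $\Gamma(\alpha+\gamma)\Gamma(\beta+\gamma)\Gamma(\delta-\gamma)$ times a ${}_2F_1\!\left[\alpha+\gamma,\beta+\gamma;1+\gamma-\delta;1\right]$. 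The residues at $t=\delta+n$ give the analogous expression with $\gamma$ and $\delta$ interchanged. Now I invoke the Gauss summation theorem ${}_2F_1[A,B;C;1]=\Gamma(C)\Gamma(C-A-B)/\bigl(\Gamma(C-A)\Gamma(C-B)\bigr)$ on each series; after simplification — again using the reflection formula to combine the $\Gamma(\delta-\gamma)$ and $\Gamma(\gamma-\delta)$ type factors with the sines — the two contributions add up to the single symmetric expression $\Gamma(\alpha+\gamma)\Gamma(\alpha+\delta)\Gamma(\beta+\gamma)\Gamma(\beta+\delta)/\Gamma(\alpha+\beta+\gamma+\delta)$ on the right side of (\ref{290}).

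The main obstacle I anticipate is the convergence/estimation step: one must check carefully that the integrand decays on the expanding contours and that the two ${}_2F_1(1)$ series actually converge, which requires $\mathrm{Re}(1+\gamma-\delta-\alpha-\gamma-\beta-\gamma)>0$ i.e. $\mathrm{Re}(\alpha+\beta+\gamma+\delta)<1$ (and similarly with $\gamma,\delta$ swapped, giving the same condition); the identity in full generality then follows by analytic continuation, since both sides are meromorphic in $(\alpha,\beta,\gamma,\delta)$ with poles only where the stated exceptional integer conditions hold. A secondary bookkeeping hazard is keeping the signs straight: the $-t$ in $\Gamma(\gamma-t)$ flips the orientation so that closing to the right picks up $-2\pi i\sum\mathrm{Res}$, and each residue of $\Gamma(\gamma-t)$ at $t=\gamma+n$ carries an extra minus sign, and these must be tracked consistently through the reflection-formula manipulations for the final answer to come out with the correct overall sign.
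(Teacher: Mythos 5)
Your argument is correct in outline, but note that the paper itself does not prove this statement at all: Barnes' lemma is quoted as a known result, with references to Barnes' original paper \cite{Bar1} and to Bailey's monograph \cite[p.\ 6]{Ba}, and is then used as an ingredient in the proof of the fundamental two-term relation (\ref{340}). What you have reconstructed is essentially the classical proof found in those references: close the contour to the right (with the arc estimate via Stirling's formula and contours threading between poles, in the same spirit as the paper's own convergence argument in Proposition \ref{P310} and Lemma \ref{L320}), pick up $-2\pi i$ times the residues at $t=\gamma+n$ and $t=\delta+n$ with the correct extra sign from $\Gamma(\gamma-t)$, resum each family into a ${}_2F_1(1)$, apply Gauss's summation theorem, and extend from $\mathrm{Re}(\alpha+\beta+\gamma+\delta)<1$ to the general case by analytic continuation. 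Your signs, the Pochhammer rewriting $\Gamma(\delta-\gamma-n)=\Gamma(\delta-\gamma)(-1)^n/(1+\gamma-\delta)_n$, and the convergence condition $\mathrm{Re}(\alpha+\beta+\gamma+\delta)<1$ are all right. The one step you wave at ("after simplification") is the only place needing real care: after Gauss summation the two residue families combine into the symmetric right-hand side of (\ref{290}) only after applying the reflection formula (\ref{270}) and a product-to-sum (sine addition) identity such as $\sin\pi(\gamma-\delta)\sin\pi(\alpha+\beta+\gamma+\delta)=\sin\pi(\alpha+\gamma)\sin\pi(\beta+\gamma)-\sin\pi(\alpha+\delta)\sin\pi(\beta+\delta)$; without spelling that out the two terms do not visibly collapse. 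With that identity supplied, your proof is complete and matches the standard one the paper cites.
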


\section{Fundamental two-term relation}

In this section we show that the function
$L(a,b,c,d;e;f,g)$ defined in (\ref{220}) can
be represented as a Barnes integral. The Barnes integral
representation will then be used to derive a
fundamental two-term relation satisfied by the $L$
function.

\begin{Proposition}
\label{P310}
\begin{eqnarray}
\label{310}
&&L(a,b,c,d;e;f,g) \nonumber \\
&&=\frac{1}{\pi \Gamma(a)\Gamma(b)\Gamma(c)\Gamma(d)
\Gamma(1+a-e)\Gamma(1+b-e)\Gamma(1+c-e)\Gamma(1+d-e)}
\nonumber \\
&&\cdot \frac{1}{2\pi i}
\int_t\frac{\Gamma(a+t)\Gamma(b+t)\Gamma(c+t)\Gamma(d+t)
\Gamma(1-e-t)\Gamma(-t)}{\Gamma(f+t)\Gamma(g+t)}\,dt.
\end{eqnarray}

\end{Proposition}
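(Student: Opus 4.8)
The plan is to evaluate the Barnes integral on the right-hand side of (\ref{310}) by closing the contour to the right and summing residues, then to recognize the resulting series as the linear combination defining $L$. First I would verify that the contour in (\ref{310}) is of the admissible type described after (\ref{240}): the poles of $\Gamma(a+t)\Gamma(b+t)\Gamma(c+t)\Gamma(d+t)$ lie to the left, while the poles of $\Gamma(1-e-t)\Gamma(-t)$ lie to the right; the Saalsch\"utzian condition $e+f+g-a-b-c-d=1$ is not yet needed here, only the generic non-integrality hypotheses that keep the two families of poles disjoint. Using Stirling's formula (\ref{250}), I would check that the integrand decays fast enough on large semicircles in the right half-plane so that the integral equals $2\pi i$ times the sum of the residues at the poles enclosed; the denominator $\Gamma(f+t)\Gamma(g+t)$ grows and helps the convergence, and this is where the estimate $e+f+g-a-b-c-d=1$ ultimately enters to guarantee the arc contribution vanishes.

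Next I would enumerate the poles to the right of the contour. The factor $\Gamma(-t)$ contributes simple poles at $t=n$, $n=0,1,2,\dots$, with residue $(-1)^{n+1}/n!$ by (\ref{260}) (the sign coming from $\Gamma(-t)$ rather than $\Gamma(t)$), and the factor $\Gamma(1-e-t)$ contributes simple poles at $t=1-e+n$, $n=0,1,2,\dots$, with residue computed again from (\ref{260}). For generic $e$ these two progressions are disjoint, so I would sum the two residue series separately. Evaluating $\prod\Gamma(a+t)$ etc.\ at $t=n$ and converting each $\Gamma(a+n)$ to $(a)_n\,\Gamma(a)$, and likewise dividing out $\Gamma(f+n)=(f)_n\Gamma(f)$ and $\Gamma(g+n)=(g)_n\Gamma(g)$, the first residue series becomes a constant times ${}_4F_3[a,b,c,d;e,f,g;1]$ after one also writes $\Gamma(1-e-n)$ via the reflection formula (\ref{270}) as $\pi/(\sin\pi(e+n)\,\Gamma(e+n))=(-1)^n\pi/(\sin\pi e\;(e)_n\Gamma(e))$. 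The second residue series, coming from the poles at $t=1-e+n$, similarly telescopes into a constant times ${}_4F_3[1+a-e,1+b-e,1+c-e,1+d-e;2-e,1+f-e,1+g-e;1]$, again using (\ref{270}) to handle the $\Gamma(-t)=\Gamma(e-1-n)$ factor.

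Then I would collect the prefactors. After the reflection-formula substitutions, the common factor $1/(\pi\,\Gamma(a)\Gamma(b)\Gamma(c)\Gamma(d)\Gamma(1+a-e)\Gamma(1+b-e)\Gamma(1+c-e)\Gamma(1+d-e))$ in (\ref{310}) combines with the Gamma values pulled out of each residue series to reproduce exactly the two denominators $\sin\pi e\,\Gamma(e)\Gamma(f)\Gamma(g)\Gamma(1+a-e)\cdots\Gamma(1+d-e)$ and $\sin\pi e\,\Gamma(a)\Gamma(b)\Gamma(c)\Gamma(d)\Gamma(1+f-e)\Gamma(1+g-e)\Gamma(2-e)$ appearing in the definition (\ref{220}), with the correct relative minus sign arising from the $(-1)$'s in the two residue computations. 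Matching these bookkeeping factors is mostly routine but must be done carefully, especially the signs.

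The main obstacle I expect is the convergence argument justifying that the Barnes integral equals the sum of residues over the right-hand poles — i.e.\ showing via (\ref{250}) that the integral over the completing arcs in the right half-plane tends to $0$. One must check that along rays $\arg t=\theta$ with $|\theta|<\pi/2$ the integrand is $O(|t|^{-c})$ for some $c>1$, or at least enough decay after accounting for the $\Gamma(-t)$ and $\Gamma(1-e-t)$ in the numerator versus $\Gamma(f+t)\Gamma(g+t)$ in the denominator, together with $\Gamma(a+t)\cdots\Gamma(d+t)$; the net exponent in Stirling's estimate is governed precisely by the quantity $e+f+g-a-b-c-d-1$, which vanishes, so the decay is borderline and the argument requires care (for instance invoking the $e^{-t}$-type factors or the sine factors from reflection to get the needed logarithmic/polynomial margin). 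Everything else — the residue evaluations, the $(a)_n$ conversions, the reflection-formula simplifications, and the prefactor matching — is mechanical once the contour-closing is justified. Given these steps, (\ref{310}) follows.
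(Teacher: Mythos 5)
Your plan follows the same route as the paper: evaluate the Barnes integral in (\ref{310}) by closing the contour to the right, sum the residues at the poles of $\Gamma(-t)$ (at $t=n$) and of $\Gamma(1-e-t)$ (at $t=1-e+n$), convert gamma values to Pochhammer symbols, and match prefactors and the relative sign with the definition (\ref{220}); that bookkeeping is exactly what the paper does, and your residue signs are right. The genuine gap is in the one step you yourself flag as the main obstacle, and your quantitative claim about it is wrong. Stirling's formula (\ref{250}) cannot be applied directly to $\Gamma(-t)$ and $\Gamma(1-e-t)$ when $t$ lies in the right half-plane: their arguments then go to infinity along directions near the negative real axis, which is excluded in (\ref{250}) and is where their poles sit. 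Consequently the ``net exponent'' you compute, $e+f+g-a-b-c-d-1=0$, is not the relevant quantity, and the decay is not borderline. The paper first rewrites the integrand via the reflection formula (\ref{270}) so that the gamma factors appear as the quotient $\Gamma(a+t)\Gamma(b+t)\Gamma(c+t)\Gamma(d+t)\big/\bigl[\Gamma(e+t)\Gamma(f+t)\Gamma(g+t)\Gamma(1+t)\bigr]$, to which Stirling does apply and which is $\sim t^{\,a+b+c+d-e-f-g-1}=t^{-2}$ by the Saalsch\"utzian condition, multiplied by the factor $-\pi^2/\bigl(\sin\pi t\,\sin\pi(e+t)\bigr)$.

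The second missing ingredient is how to control that sine factor on the closing arcs. Your suggestion to estimate ``along rays $\arg t=\theta$ with $|\theta|<\pi/2$'' fails near the positive real axis, where the integrand has poles, so no uniform ray estimate exists. The paper instead chooses the semicircles $C_N$ so that $\inf_N \mbox{dist}\bigl(C_N,\mathbb{Z}\cup(\mathbb{Z}-e)\bigr)>0$ and proves Lemma \ref{L320}, which then bounds $1/\bigl|\sin\pi t\,\sin\pi(e+t)\bigr|$ by $1/K^2$ on every $C_N$; combined with the $t^{-2}$ estimate this gives $|G(t)|\leq\tilde K|t|^{-2}$ on $C_N$, so the arc integrals are $O(\rho_N^{-1})\to 0$. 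Once you supply these two ingredients --- the reflection-formula rewriting that makes Stirling applicable (yielding genuine $|t|^{-2}$ decay, with the Saalsch\"utzian condition entering exactly there) and the pole-avoiding choice of arcs justified by Lemma \ref{L320} --- the rest of your outline coincides with the paper's proof.
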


In the proof of Proposition \ref{P310}, we will 
need the following statement.

\begin{Lemma}
\label{L320} 

For every $\varepsilon >0 $, there is a constant
$K=K(\varepsilon) $, such that if 
$\emph{dist} (z, \mathbb{Z}) \geq
\varepsilon $,  then
\begin{equation}
\label{320} |\sin \pi z |\geq K e^{\pi |\emph{Im} (z)|}.
\end{equation}

\end{Lemma}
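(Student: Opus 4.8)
The plan is to prove the elementary lower bound \eqref{320} on $|\sin\pi z|$ by writing the sine in terms of exponentials and separating the real and imaginary parts of $z$. Write $z = x + iy$ with $x,y$ real. Then
\begin{equation*}
\sin\pi z = \frac{e^{i\pi z} - e^{-i\pi z}}{2i}, \qquad
|\sin\pi z| = \frac{1}{2}\left|e^{i\pi x - \pi y} - e^{-i\pi x + \pi y}\right|.
\end{equation*}
Using the standard identity $|\sin\pi z|^2 = \sin^2\pi x + \sinh^2\pi y$ (which follows by expanding the modulus above), I get $|\sin\pi z| \geq |\sinh\pi y|$ always, and also $|\sin\pi z| \geq |\sin\pi x|$. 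The term $|\sinh\pi y|$ is comparable to $e^{\pi|y|}$ for $|y|$ large, so the only place the bound can fail is when $|y|$ is small; there one must exploit the distance of $x$ from the integers via the $|\sin\pi x|$ term.

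The key steps, in order: first, record $|\sin\pi z|^2 = \sin^2\pi x + \sinh^2\pi y$. Second, fix $\varepsilon > 0$; without loss of generality $\varepsilon \leq \tfrac12$. Split into two cases according to the size of $|y|$. In the regime $|y| \geq 1$, use $\sinh^2\pi y \geq \tfrac14 e^{2\pi|y|}(1 - e^{-2\pi})^2$, so $|\sin\pi z| \geq c_1 e^{\pi|y|}$ for an absolute constant $c_1$; this handles large $|y|$ with a constant not even depending on $\varepsilon$. In the regime $|y| \leq 1$, note $e^{\pi|y|} \leq e^\pi$, so it suffices to bound $|\sin\pi z|$ below by a positive constant depending only on $\varepsilon$. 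Here $\mathrm{dist}(z,\mathbb{Z}) \geq \varepsilon$ forces $\mathrm{dist}(x,\mathbb{Z}) \geq \varepsilon$ whenever $|y|$ is, say, $\leq \varepsilon$ — in that sub-case $|\sin\pi x| \geq \sin\pi\varepsilon \geq 2\varepsilon$ (concavity of $\sin$ on $[0,\pi/2]$), giving $|\sin\pi z| \geq 2\varepsilon$; and in the remaining sub-case $\varepsilon \leq |y| \leq 1$ one has $|\sinh\pi y| \geq \sinh\pi\varepsilon > 0$. Taking $K$ to be the minimum of $c_1$, $2\varepsilon/e^\pi$, and $\sinh(\pi\varepsilon)/e^\pi$ yields \eqref{320}.

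I do not expect any serious obstacle here; the statement is a routine estimate. The only point requiring a little care is the bookkeeping of cases so that in each regime the chosen quantity ($|\sinh\pi y|$ or $|\sin\pi x|$) is genuinely bounded below, and making sure the final constant $K$ is assembled correctly as a function of $\varepsilon$ alone (with the large-$|y|$ contribution actually independent of $\varepsilon$). An alternative, slightly slicker packaging is to prove directly that $|\sin\pi z| \geq C(\varepsilon) e^{\pi|y|}$ by observing $|\sin\pi z| = \tfrac12 e^{\pi|y|}|1 - e^{\mp 2\pi i \operatorname{sgn}(y)\, z}|$ and showing the bracketed factor is bounded away from $0$ uniformly on $\{\mathrm{dist}(z,\mathbb{Z})\geq\varepsilon\}$ by a compactness/periodicity argument in the $x$-variable together with the decay $e^{-2\pi|y|}\to 0$; but the explicit two-case computation above is the most transparent and is what I would write down.
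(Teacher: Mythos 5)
Your overall strategy (write $z=x+iy$, use $|\sin\pi z|^2=\sin^2\pi x+\sinh^2\pi y$, and split into cases according to the size of $|y|$) is essentially the same as the paper's proof, but one step of your case analysis is false as stated. In the regime $|y|\le\varepsilon$ you claim that $\mathrm{dist}(z,\mathbb{Z})\ge\varepsilon$ forces $\mathrm{dist}(x,\mathbb{Z})\ge\varepsilon$. It does not: since $\mathrm{dist}(z,\mathbb{Z})^2=\mathrm{dist}(x,\mathbb{Z})^2+y^2$, the hypothesis only yields $\mathrm{dist}(x,\mathbb{Z})^2\ge\varepsilon^2-y^2$, which is arbitrarily small when $|y|$ is close to $\varepsilon$. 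Concretely, take $z=\delta+i\sqrt{\varepsilon^2-\delta^2}$ with $\delta>0$ tiny: then $\mathrm{dist}(z,\mathbb{Z})=\varepsilon$ and $|y|<\varepsilon$, so the point falls strictly into your first sub-case, yet $\mathrm{dist}(x,\mathbb{Z})=\delta$ and $|\sin\pi x|=\sin\pi\delta$ is as small as you like, so the bound $|\sin\pi x|\ge\sin\pi\varepsilon\ge 2\varepsilon$ you invoke there is simply unavailable. (The lemma is still true at such points --- it is $\sinh\pi|y|$ that saves them --- but your written argument does not cover them, since neither of your two sub-case estimates applies.)

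The repair is only a change of thresholds, and it is exactly the device used in the paper. Note that if both $\mathrm{dist}(x,\mathbb{Z})<\varepsilon/2$ and $|y|<\varepsilon/2$, then $\mathrm{dist}(z,\mathbb{Z})\le\mathrm{dist}(x,\mathbb{Z})+|y|<\varepsilon$; hence under the hypothesis at least one of $\mathrm{dist}(x,\mathbb{Z})\ge\varepsilon/2$ or $|y|\ge\varepsilon/2$ must hold. In the first alternative the paper bounds $|\sin\pi z|\ge|\sin\pi x|\cosh\pi y\ge\tfrac12\sin(\pi\varepsilon/2)\,e^{\pi|y|}$ (your weaker bound $|\sin\pi z|\ge|\sin\pi x|$ would also do after using $e^{\pi|y|}\le e^{\pi}$ in your small-$|y|$ regime), and in the second $|\sin\pi z|\ge\sinh\pi|y|\ge\tfrac12(1-e^{-\pi\varepsilon})\,e^{\pi|y|}$, giving $K=\tfrac12\min\{\sin(\pi\varepsilon/2),\,1-e^{-\pi\varepsilon}\}$ with no further case on $|y|\ge 1$ needed. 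Alternatively, keep your structure but split at $|y|\ge\varepsilon/2$ instead of $|y|\ge\varepsilon$: when $|y|<\varepsilon/2$ the hypothesis really does force $\mathrm{dist}(x,\mathbb{Z})\ge\tfrac{\sqrt3}{2}\varepsilon$, and your assembly of the final constant then goes through.
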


\begin{proof}

Let $z=x+iy$. We have $$ \sin \pi z = \frac{1}{2i} \left (
e^{i \pi (x+iy)}-  e^{-i \pi (x+iy)} \right ) = \sin \pi x  \cosh
\pi y + i \cos \pi x \sinh \pi y. $$ 

Since $|\sinh \pi y| \leq
\cosh \pi y $, it follows that $ \sinh \pi|y|  \leq |\sin \pi z| \leq
\cosh \pi y. $

We may assume that $\varepsilon \in (0,1).$  If 
$\mbox{dist} (z, \mathbb{Z})
\geq \varepsilon,$ then at least one of the following 
two statements holds:

(a) $\mbox{dist} (x, \mathbb{Z}) \geq \varepsilon/2.$   

(b) $|y| \geq
\varepsilon/2.$  

If (a) holds, then
$$|\sin \pi z | \geq |\sin \pi x | \cosh
\pi y  \geq  \sin (\pi \varepsilon/2) \cosh \pi y \geq
\frac{1}{2}\sin (\pi \varepsilon/2)  e^{\pi |y|}.$$ 

If (b) holds,
then 
$$|\sin \pi z | \geq \sinh \pi |y| = \frac{1}{2}
e^{\pi|y|}(1-e^{-2\pi |y|}) \geq \frac{1}{2}(1-e^{-\pi \varepsilon})
e^{\pi|y|}. $$ 

Thus (\ref{320}) holds with $K= \frac{1}{2} \min
\{\sin (\pi \varepsilon/2),1-e^{-\pi \varepsilon}\}.$

\end{proof}

\begin{proof}[Proof of Proposition \ref{P310}]

Let
\begin{eqnarray}
\label{330}
&&I\left[
{\displaystyle a,b,c,d;
\atop
\displaystyle e;f,g}\right]\nonumber\\
&&=\frac{1}{2\pi i}
\int_t\frac{\Gamma(a+t)\Gamma(b+t)\Gamma(c+t)\Gamma(d+t)
\Gamma(1-e-t)\Gamma(-t)}{\Gamma(f+t)\Gamma(g+t)}\,dt.
\end{eqnarray}

For $N \geq 1$, let $C_N$ be the semicircle of radius $\rho_N  $ on
the right side of the imaginary axis and center at the origin, chosen
in such a way that $\rho_N \to \infty$ as $N \to \infty$ and
$$
\varepsilon:= \inf_N  
\mbox{dist}(C_N, \mathbb{Z}\cup (\mathbb{Z}-e)) >0.
$$

The formula
(\ref{270}) gives
\begin{eqnarray*}
G(t):=\frac{\Gamma(a+t)\Gamma(b+t)\Gamma(c+t)\Gamma(d+t)
\Gamma(1-e-t)\Gamma(-t)}{\Gamma(f+t)\Gamma(g+t)}\\
=\frac{-\pi^2\Gamma(a+t)\Gamma(b+t)
\Gamma(c+t)\Gamma(d+t)}{\Gamma(f+t)\Gamma(g+t)
\Gamma(e+t)\Gamma(1+t)\sin \pi t \sin \pi (e+t)}.
\end{eqnarray*}

By Stirling's formula (\ref{250}),
$$\frac{\Gamma(a+t)\Gamma(b+t)
\Gamma(c+t)\Gamma(d+t)}{\Gamma(f+t)\Gamma(g+t)
\Gamma(e+t)\Gamma(1+t)} \sim
t^{a+b+c+d-e-f-g-1}=t^{-2}.$$

By Lemma \ref{L320}, 
there exists a constant $K= K(\varepsilon)$ such
that
$$\frac{1}{|\sin \pi t  \sin \pi (e+t)|} \leq \frac{1}{K^2}\quad
\text{if} \;\; t \in C_N, \; N= 1,2, \ldots .$$

Therefore we obtain by the above estimates that there is a
constant $\tilde{K} >0 $ such that
$$ |G(t)| \leq \tilde{K}/|t|^2 \quad \text{if} \; \; t \in C_N, \;\; N=1,2,\ldots. $$

Thus $$ \left | \int_{C_N} G(t) \,dt \right | \leq
\frac{\tilde{K}}{\rho_N^2} \cdot \pi \rho_N    \to 0 \quad \mbox{as }
N \to \infty,$$ which implies
$$\int_{C_N} G(t) \,dt \to 0 \quad \mbox{as } N \to \infty.$$

It follows that the integral given by
$I\left[
{\displaystyle a,b,c,d;
\atop
\displaystyle e;f,g}\right]$ 
is equal to the sum of the
residues of the poles of
$\Gamma(1-e-t)$ and $\Gamma(-t)$. Adding up
the residues and making use of 
(\ref{270}), we obtain
\begin{eqnarray*}
I\left[
{\displaystyle a,b,c,d;
\atop
\displaystyle e;f,g}\right]
=\frac{\pi\Gamma(a)\Gamma(b)\Gamma(c)\Gamma(d)}
{\sin \pi e\ \Gamma(e)\Gamma(f)\Gamma(g)}
{}_4F_3\left[
{\displaystyle a,b,c,d;
\atop
\displaystyle e,f,g;}1\right]\\
-\frac{\pi \Gamma(1+a-e)\Gamma(1+b-e)\Gamma(1+c-e)
\Gamma(1+d-e)}
{\sin \pi e\ \Gamma(1+f-e)\Gamma(1+g-e)\Gamma(2-e)}\\
\cdot {}_4F_3\left[
{\displaystyle 1+a-e, 1+b-e, 1+c-e, 1+d-e;
\atop
\displaystyle 1+f-e,1+g-e,2-e;}1\right],
\end{eqnarray*}
from which the result follows.

\end{proof}

The fundamental two-term relation satisfied
by $L(a,b,c,d;e;f,g)$ is given in the
next proposition.

\begin{Proposition}
\label{P330}

\begin{equation}
\label{340}
L\left[
{\displaystyle a,b,c,d;
\atop
\displaystyle e;f,g}\right]
=L\left[
{\displaystyle a,b,g-c,g-d;
\atop 
\displaystyle 1+a+b-f;1+a+b-e,g}\right].
\end{equation}

\end{Proposition}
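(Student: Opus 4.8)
The plan is to work entirely with the Barnes integral of Proposition~\ref{P310}. Write $I[a,b,c,d;e;f,g]$ for the Barnes integral in (\ref{330}), so that by Proposition~\ref{P310} the function $L$ is $I$ divided by $\pi$ times a product of eight gamma functions. Applying Proposition~\ref{P310} to \emph{both} sides of (\ref{340}) and using the Saalsch\"utzian relation $e+f+g-a-b-c-d=1$ to rewrite the gamma prefactors of the transformed parameters (for instance $\Gamma(1+(g-c)-(1+a+b-f))=\Gamma(f+g-a-b-c)=\Gamma(1+d-e)$, and similarly for the others), one checks that (\ref{340}) is equivalent to the single identity
\[
\Gamma(g-c)\Gamma(g-d)\Gamma(f-a)\Gamma(f-b)\,I[a,b,c,d;e;f,g]
=\Gamma(c)\Gamma(d)\Gamma(1+a-e)\Gamma(1+b-e)\,I[a,b,g-c,g-d;1+a+b-f;1+a+b-e,g].
\]
Thus everything reduces to a transformation of the Barnes integral $I$.

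The device is to use Barnes' lemma (Lemma~\ref{BL}) \emph{backwards}. Reading (\ref{290}) with $(\alpha,\beta,\gamma,\delta)=(t,g-c-d,c,d)$ gives, for $t$ on the contour,
\[
\frac{\Gamma(c+t)\Gamma(d+t)}{\Gamma(g+t)}
=\frac{1}{\Gamma(g-c)\Gamma(g-d)}\cdot\frac{1}{2\pi i}\int_s\Gamma(t+s)\Gamma(g-c-d+s)\Gamma(c-s)\Gamma(d-s)\,ds,
\]
and with $(\alpha,\beta,\gamma,\delta)=(t,f-a-b,a,b)$ one gets likewise $\Gamma(a+t)\Gamma(b+t)/\Gamma(f+t)$ equal to $1/(\Gamma(f-a)\Gamma(f-b))$ times a Barnes integral in a new variable $u$ with integrand $\Gamma(t+u)\Gamma(f-a-b+u)\Gamma(a-u)\Gamma(b-u)$. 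Substituting both into $I[a,b,c,d;e;f,g]$ yields a triple integral in $t,s,u$; after interchanging the order of integration (absolute convergence is established exactly as in the proof of Proposition~\ref{P310}, via Stirling's formula (\ref{250}) and Lemma~\ref{L320}), the inner $t$-integral becomes the elementary Barnes integral $\frac{1}{2\pi i}\int_t\Gamma(s+t)\Gamma(u+t)\Gamma(1-e-t)\Gamma(-t)\,dt$, which Lemma~\ref{BL} evaluates to $\Gamma(s)\Gamma(u)\Gamma(1-e+s)\Gamma(1-e+u)/\Gamma(1-e+s+u)$. The outcome is that $\Gamma(g-c)\Gamma(g-d)\Gamma(f-a)\Gamma(f-b)\,I[a,b,c,d;e;f,g]$ equals the explicit double integral
\[
D=\frac{1}{(2\pi i)^2}\int_s\int_u\Gamma(g-c-d+s)\Gamma(c-s)\Gamma(d-s)\,\Gamma(f-a-b+u)\Gamma(a-u)\Gamma(b-u)\,\frac{\Gamma(s)\Gamma(u)\Gamma(1-e+s)\Gamma(1-e+u)}{\Gamma(1-e+s+u)}\,du\,ds.
\]

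Next I would run the same three steps on the right-hand integral $I[a,b,g-c,g-d;1+a+b-f;1+a+b-e,g]$, this time pairing $\Gamma(g-c+t)\Gamma(g-d+t)$ with $\Gamma(g+t)$ and $\Gamma(a+t)\Gamma(b+t)$ with $\Gamma(1+a+b-e+t)$. This represents $\Gamma(c)\Gamma(d)\Gamma(1+a-e)\Gamma(1+b-e)\,I[a,b,g-c,g-d;1+a+b-f;1+a+b-e,g]$ as a double integral $D'$ whose integrand is $\Gamma(c+d-g+s)\Gamma(g-c-s)\Gamma(g-d-s)\,\Gamma(1-e+u)\Gamma(a-u)\Gamma(b-u)$ times $\Gamma(s)\Gamma(u)\Gamma(f-a-b+s)\Gamma(f-a-b+u)/\Gamma(f-a-b+s+u)$. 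The heart of the argument is the observation that $D$ and $D'$ have \emph{identical integrands} after the substitution $s\mapsto s+(c+d-g)$ in $D$: the Saalsch\"utzian relation forces $g-c-d+(c+d-g)=0$, $1-e+(c+d-g)=f-a-b$, hence also $1-e+(c+d-g)+s+u=f-a-b+s+u$, while $c-(c+d-g)=g-d$ and $d-(c+d-g)=g-c$; so the $s$-block and the coupling factor of $D$ turn into those of $D'$, and the $u$-blocks already coincide. Therefore $D=D'$, which is exactly the reduced identity above, and (\ref{340}) follows.

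The only genuinely creative step is spotting the correct pair of ``backward Barnes'' substitutions together with the shift $s\mapsto s+(c+d-g)$ that activates the Saalsch\"utzian relation; once these are in hand the algebra is forced. I expect the main obstacle to be analytic rather than algebraic: one must verify absolute convergence of the triple and double integrals so that the interchanges of integration are legitimate, check that the contour-existence hypothesis of Lemma~\ref{BL} (no sum of two like-signed arguments is an integer) holds at each application, and justify that the contour can be translated by $c+d-g$ without meeting a pole. All of these can be arranged for $\vec{x}$ in a suitable open subset of $V$, and since both sides of (\ref{340}) are meromorphic in $\vec{x}$, the identity then extends to all of $V$ by analytic continuation.
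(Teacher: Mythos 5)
Your proposal is correct and follows essentially the same route as the paper: reduce (\ref{340}) to an identity between the Barnes integrals $I$ via Proposition \ref{P310}, expand $\Gamma(a+t)\Gamma(b+t)/\Gamma(f+t)$ and $\Gamma(c+t)\Gamma(d+t)/\Gamma(g+t)$ by Barnes' lemma, interchange integrals and evaluate the $t$-integral by Barnes' lemma again, then identify the resulting double integrals by a shift of the inner variable together with the Saalsch\"utzian condition. The only cosmetic difference is that you derive the double-integral form for both sides and match integrands (with the shift $s\mapsto s+(c+d-g)$), whereas the paper derives it once and observes its invariance under the parameter map after the analogous shift.
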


\begin{proof}

Let
$I\left[
{\displaystyle a,b,c,d;
\atop
\displaystyle e;f,g}\right]$
be as given in (\ref{330}). As a first step,
we will prove that 
\begin{eqnarray}
\label{350}
\frac{I\left[
{\displaystyle a,b,c,d;
\atop
\displaystyle e;f,g}\right]}{\Gamma(c)\Gamma(d)
\Gamma(1+a-e)\Gamma(1+b-e)}\nonumber\\
=\frac{I\left[
{\displaystyle a,b,g-c,g-d;
\atop
\displaystyle 1+a+b-f;1+a+b-e,g}\right]}{\Gamma(f-a)\Gamma(f-b)
\Gamma(g-c)\Gamma(g-d)}.
\end{eqnarray}

By Barnes' lemma,
\begin{align*}
&\frac{\Gamma(a+t)\Gamma(b+t)}
{\Gamma(f+t)}\\
=\frac{1}{2 \pi i\Gamma(f-a)\Gamma(f-b)}
\int_{u}&\Gamma(t+u)\Gamma(f-a-b+u)
\Gamma(a-u)\Gamma(b-u)\,du
\end{align*}
and
\begin{align*}
&\frac{\Gamma(c+t)\Gamma(d+t)}{\Gamma(g+t)}\\
=\frac{1}{2 \pi i\Gamma(g-c)\Gamma(g-d)}
\int_{v}&\Gamma(t+v)\Gamma(g-c-d+v)
\Gamma(c-v)\Gamma(d-v)\,dv.
\end{align*}

We re-write the integral for
$I\left[
{\displaystyle a,b,c,d;
\atop
\displaystyle e;f,g}\right]$ by substituting for the above expressions, 
changing the order of integration, so that we integrate 
with respect to $t$ first, and then applying Barnes' Lemma again 
to the integral with respect to $t$. We obtain
\begin{eqnarray}
\label{360}
&&\frac{I\left[
{\displaystyle a,b,c,d;
\atop
\displaystyle e;f,g}\right]}{\Gamma(c)\Gamma(d)
\Gamma(1+a-e)\Gamma(1+b-e)}\nonumber\\
&&=\frac{-1}
{4 \pi^2 \Gamma(c)\Gamma(d)
\Gamma(1+a-e)\Gamma(1+b-e)
\Gamma(f-a)\Gamma(f-b)
\Gamma(g-c)\Gamma(g-d)}\nonumber\\
&&\cdot
\int_{u} \Gamma(f-a-b+u)\Gamma(a-u)
\Gamma(b-u)\Gamma(u)\Gamma(1-e+u)\nonumber\\
&&\cdot \left( \int_{v} \frac
{\Gamma(g-c-d+v)\Gamma(c-v)
\Gamma(d-v)\Gamma(v)\Gamma(1-e+v)}
{\Gamma(1-e+u+v)}\,dv \right) du.
\end{eqnarray}

After the substitution 
$v \mapsto c+d-f+v$ in the inside integral,
it is easily checked 
(using the Saalsch\"utzian
condition
$e+f+g-a-b-c-d=1$)
that the
right-hand side of (\ref{360})
is invariant under the transformation
$$(a,b,c,d;e;f,g) \mapsto
(a,b,g-c,g-d;1+a+b-f;1+a+b-e,g),$$
which proves (\ref{350}). The result
in the proposition now follows 
immediately from 
(\ref{350}) upon writing the two $L$ functions
in (\ref{340}) in terms of their
Barnes integral representations (\ref{310}).

\end{proof}

\section{Invariance group}

In the previous section we showed that
the function $L(a,b,c,d;e;f,g)$ satisfies 
the two-term relation (\ref{340}). If we
define
\begin{equation}
\label{410}
A=\begin{pmatrix}
1 & 0 & 0 & 0 & 0 & 0 & 0\\
0 & 1 & 0 & 0 & 0 & 0 & 0\\
0 & 0 & -1 & 0 & 0 & 0 & 1\\
0 & 0 & 0 & -1 & 0 & 0 & 1\\
0 & 0 & -1 & -1 & 1 & 0 & 1\\
0 & 0 & -1 & -1 & 0 & 1 & 1\\
0 & 0 & 0 & 0 & 0 & 0 & 1\\
\end{pmatrix} 
\in GL(7,\mathbb{C}),
\end{equation}
then (\ref{340}) can be
expressed as
$L(\vec{x})=L(A\vec{x})$.

If $\sigma \in S_7$, we will
identify $\sigma$ with the matrix
in $GL(7,\mathbb{C})$ that
permutes the standard basis
$\{e_1,e_2,\ldots,e_7\}$ of the 
complex vector
space $\mathbb{C}^7$
according to the permutation
$\sigma$. For example,
$$(123)= \begin{pmatrix}
0 & 0 & 1 & 0 & 0 & 0 & 0\\
1 & 0 & 0 & 0 & 0 & 0 & 0\\
0 & 1 & 0 & 0 & 0 & 0 & 0\\
0 & 0 & 0 & 1 & 0 & 0 & 0\\
0 & 0 & 0 & 0 & 1 & 0 & 0\\
0 & 0 & 0 & 0 & 0 & 1 & 0\\
0 & 0 & 0 & 0 & 0 & 0 & 1\\
\end{pmatrix}.$$

Let
\begin{equation}
\label{420}
G_L=\langle (12),(23),
(34),(67),A \rangle
\leq GL(7,\mathbb{C}).
\end{equation}
The two-term relation
(\ref{340}) along 
with the trivial
invariances of 
the function
$L(a,b,c,d;e;f,g)$
under permutations of $a,b,c,d$ and
interchanging $f,g$ implies that
$G_L$ is an invariance group
for $L(a,b,c,d;e;f,g)$, i.e.
$L(\vec{x})=L(\alpha \vec{x})$
for every $\alpha \in G_L$.

The goal of this section is to
find the isomorphism type of
the group $G_L$ and further to 
describe the two-term relations
for the $L$ function in terms of
a double coset decomposition
of $G_L$ with respect to its
subgroup $\Sigma$ defined as
follows:
\begin{equation}
\label{430}
\Sigma=\langle (12),(23),
(34),(67) \rangle.
\end{equation}
The group $\Sigma$ is a
subgroup of $G_L$ consisting
of permutation matrices. It is
clear that $\Sigma \cong S_4 \times S_2$
and so $|\Sigma|=48$.
We note that if $\sigma \in \Sigma,
\alpha \in G_L$, the multiplication
$\sigma\alpha$ permutes the rows of
$\alpha$, and the multiplication
$\alpha\sigma$ permutes the columns of
$\alpha$.
A double coset of 
$\Sigma$ in $G_L$ 
is a set of the form
\begin{equation}
\label{440}
\Sigma \alpha \Sigma=
\{\sigma \alpha \tau : \sigma,\tau \in \Sigma\},
\mbox{ for some } \alpha \in G_L.
\end{equation}
The distinct double cosets
of the form (\ref{440}) partition
the group $G_L$ and give us
a double coset decomposition of
$G_L$ with respect to $\Sigma$. (See 
\cite[p. 119]{DF} for
more on double cosets.)

In Theorem \ref{T410} below we show
that the group $G_L$ is isomorphic
to the Coxeter group $W(D_5)$,
which is of
order 1920. In Theorem \ref{420}
we show that the subgroup $\Sigma$
is the largest permutation subgroup
of $G_L$ and obtain a double coset
decomposition of $G_L$ with respect to
$\Sigma$.  
We list a representative for
each of the six double cosets obtained
and give the six invariance relations
induced by those representatives
(see (\ref{450})--(\ref{460})). 
The six invariance relations 
(\ref{450})--(\ref{460})
listed are
all the ``different" types of invariance
relations in the sense that every other invariance
relation can be obtained by permuting
the first four entries and 
permuting
the last two entries 
on the right-hand side
of a listed invariance relation 
(which corresponds
to permuting the rows of the accompanying matrix), and
by permuting $a,b,c,d$ and  permuting $f,g$
on the right-hand side
of a listed invariance relation 
(which corresponds to permuting the
columns of the accompanying matrix).
  
\begin{Theorem}
\label{T410}

The group $G_L$ is isomorphic to the Coxeter group 
$W(D_5)$, which is of order 1920.

\end{Theorem}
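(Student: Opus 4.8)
The plan is to realize $G_L$ as a Coxeter group of type $D_5$ by producing five explicit Coxeter generators inside $G_L$, and then to pin down its order by an elementary argument about the proper quotients of $W(D_5)$.

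The first step is to record the relations that follow immediately from the shape of $A$ in (\ref{410}): $A^2=I$, so that $A$ (like the four transpositions) is an involution; $(34)A=A(34)$; and $(12)A=A(12)$, the last because $A$ acts trivially on the first two coordinates and leaves the span of the others invariant. I then set
\[
r_1=(34),\qquad r_2=(34)A,\qquad r_3=(23),\qquad r_4=(12),\qquad r_5=(67),
\]
observing that each $r_i$ is an involution in $G_L$ (for $r_2$ this uses $A^2=I$ and $(34)A=A(34)$) and that $r_1,\dots,r_5$ generate $G_L$, since $(12)=r_4$, $(23)=r_3$, $(34)=r_1$, $(67)=r_5$ and $A=r_1r_2$.

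Next I would verify that $r_1,\dots,r_5$ satisfy exactly the Coxeter relations of the Dynkin diagram $D_5$, with $r_3$ the branch node, $r_1$ and $r_4$ its two short branches, and the remaining tail $r_3,r_2,r_5$. Eight of the ten pairwise relations are immediate: $(r_1r_3)^3=(r_3r_4)^3=I$ are the $A_3$-relations inside $S_4=\langle(12),(23),(34)\rangle$; the pairs $\{r_1,r_4\}$, $\{r_1,r_5\}$, $\{r_3,r_5\}$, $\{r_4,r_5\}$ commute because the transpositions involved act on disjoint coordinates; $r_2$ and $r_4$ commute because $(12)$ commutes with both $(34)$ and $A$; and $r_1$ and $r_2$ commute by $(34)A=A(34)$. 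The only two substantive relations are $(r_2r_3)^3=I$ and $(r_2r_5)^3=I$, i.e.\ that the $7\times 7$ matrices $(34)A(23)$ and $(34)A(67)$ have cube equal to the identity; this is confirmed by a direct (if slightly tedious) matrix computation. Granting these relations, the defining presentation of the Coxeter group $W(D_5)$ yields a surjective homomorphism $\varphi\colon W(D_5)\twoheadrightarrow G_L$.

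Finally I would show that $\varphi$ is an isomorphism. Writing $W(D_5)\cong(\mathbb{Z}/2)^4\rtimes S_5$, with $(\mathbb{Z}/2)^4$ the even-sign-change subgroup permuted by $S_5$, one checks that its only normal subgroups are $1$, $(\mathbb{Z}/2)^4$, $(\mathbb{Z}/2)^4\rtimes A_5$ and $W(D_5)$ itself: the module $(\mathbb{Z}/2)^4$ is $S_5$-irreducible over $\mathbb{F}_2$ (since $2\nmid 5$) and self-centralizing in $W(D_5)$, so any nontrivial normal subgroup contains it and hence corresponds to a normal subgroup of $S_5$. Therefore every proper nontrivial quotient of $W(D_5)$ has order $120$ or $2$. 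Since $G_L$ contains the permutation subgroup $\Sigma\cong S_4\times S_2$ of order $48$, and $48$ divides neither $120$ nor $2$, the kernel of $\varphi$ must be trivial; hence $G_L\cong W(D_5)$, of order $1920$. The one step that calls for genuine insight rather than routine verification is the choice $r_2=(34)A$: recognizing that conjoining $A$ with the transposition $(34)$ converts the matrix of the fundamental two-term relation into one of the two ``fork'' reflections of a $D_5$ system is what makes all the Coxeter relations fall into place, and that is where I expect the main difficulty to lie.
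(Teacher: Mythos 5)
Your proposal is correct, and its first half coincides with the paper's argument: you choose exactly the same five involutions (the paper's $a_1=(34)$, $a_2=(23)$, $a_3=(34)A$, $a_4=(67)$, $a_{1'}=(12)$), match them to the $D_5$ diagram with $(23)$ at the branch node, and verify the Coxeter relations --- the only nonroutine ones being the two braid relations involving $(34)A$, which both you and the paper dispose of by direct matrix computation --- to get a surjection $\varphi\colon W(D_5)\twoheadrightarrow G_L$. Where you genuinely diverge is in proving injectivity. The paper argues quantitatively: since $|G_L|$ divides $1920$, it suffices to show $|G_L|>960$, which it does by exhibiting two distinct double cosets $\Sigma A\Sigma$ and $\Sigma((123)(67)A)^2\Sigma$, each of size $12\cdot 48$, via the row/column-multiset counting that is then reused wholesale in Theorem \ref{T420}. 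You instead argue structurally: $W(D_5)\cong(\mathbb{Z}/2)^4\rtimes S_5$ with the even-sign-change module irreducible over $\mathbb{F}_2$ and self-centralizing, so the only proper nontrivial normal subgroups are $(\mathbb{Z}/2)^4$ and $(\mathbb{Z}/2)^4\rtimes A_5$, hence any proper quotient has order $120$, $2$ or $1$; since $\Sigma\cong S_4\times S_2$ of order $48$ sits inside $G_L$ and $48$ divides none of these, $\ker\varphi$ is trivial. Both arguments are sound (your normal-subgroup analysis checks out: irreducibility of the dimension-$4$ module and faithfulness of the $S_5$-action give exactly the four normal subgroups you list). Your route is shorter and more conceptual, needing no matrix counting beyond the relation checks, but it leans on knowing the semidirect-product structure of $W(D_5)$; the paper's counting is more pedestrian yet pays for itself, since the same multiset technique immediately yields the full double coset decomposition and the classification of invariance relations in the next theorem, which your argument does not produce.
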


\begin{proof}

The Dynkin diagram of the Coxeter
group $W(D_5)$ is given by the graph
with vertices labeled 
$1,2,3,4,1'$, where
$i,j \in \{1,2,3,4\}$ are connected
by an edge if 
and only if $|i-j|=1$, and
$1'$ is connected to $2$ only.
The presentation of $W(D_5)$ is
given by
$$W(D_5)=\langle s_1,s_2,s_3,s_4,s_{1'}:
(s_is_j)^{m_{ij}}=1 \rangle,$$
where
$m_{ii}=1$ for all $i$; and 
for $i$ and $j$ distinct,
$m_{ij}=3$ if $i$ and $j$ are
connected by an edge, and 
$m_{ij}$=2 otherwise.
It is well-known that the order of $W(D_5)$ is
$2^4\cdot5!=1920$ (see \cite[Section $2.11$]{H}). 

Consider the elements of $G_L$ given by
\begin{equation}
\label{470}
a_1=(34),a_2=(23),a_3=(34)A,
a_4=(67), a_{1'}=(12).
\end{equation}
It is clear $G_L=\langle a_i:
i \in \{1,2,3,4,1'\} \rangle$.
A direct computation shows that 
$$(a_ia_j)^{m_{ij}}=1, \quad \mbox{for all }
i,j \in \{1,2,3,4,1'\}.$$

Therefore if we define $\varphi(s_i)=a_i$ for 
every $i \in \{1,2,3,4,1'\}$, $\varphi$
extends (uniquely) to a surjective homomorphism
from $W(D_5)$ onto $G_L$
(see \cite[Section $1.6$]{DF}). 
Since $W(D_5)$ is a finite
group, if we show that $G_L$ and $W(D_5)$ have
the same order, it will follow that $\varphi$ is
an isomorphism and 
the theorem will be proved.

Since $\varphi$ is a surjective
homomorphism, the First Isomorphism Theorem for 
groups 
(see \cite[p.\ 98]{DF})
implies that
$|G_L| = |\mbox{Im}(\varphi)|$ must divide
$|W(D_5)|=1920$. Therefore if we show that
$|G_L| > 960=\frac{1920}{2}$,
then necessarily $|G_L|=1920$. We will
obtain an estimate on the order of $G_L$
by computing the sizes of the double cosets
$\Sigma A \Sigma$ and 
$\Sigma ((123)(67)A)^2 \Sigma$
of $\Sigma$ in $G_L$,
where $\Sigma$ is as given in (\ref{430}).

The matrix $A$ is given by
$$A=\begin{pmatrix}
1 & 0 & 0 & 0 & 0 & 0 & 0\\
0 & 1 & 0 & 0 & 0 & 0 & 0\\
0 & 0 & -1 & 0 & 0 & 0 & 1\\
0 & 0 & 0 & -1 & 0 & 0 & 1\\
0 & 0 & -1 & -1 & 1 & 0 & 1\\
0 & 0 & -1 & -1 & 0 & 1 & 1\\
0 & 0 & 0 & 0 & 0 & 0 & 1\\
\end{pmatrix}.$$
We see that all the rows of $A$ are distinct
as sequences.
Therefore multiplying 
$A$ on the left by $\sigma$, for
$\sigma \in \Sigma$, will give us 48 
matrices in $G_L$ that
belong to the double coset $\Sigma A \Sigma$. 
We note that the products $\sigma A$, for
$\sigma \in \Sigma$, amount to obtaining
all possible permutations of the first four rows
of $A$ and all possible permutations of the
last two rows of $A$.
By considering products
of the form $A\sigma$, for
$\sigma \in \Sigma$, we can permute the first
four columns of $A$ and the last two columns of $A$ 
in every possible way. If we first
permute columns of $A$ 
that
are different as multisets, and then permute the
rows of the resulting matrix in all 48 different ways,
we obtain 48 new elements in $G_L$
that belong to the double coset $\Sigma A \Sigma$.
Now,
the first and second columns of $A$ are the same 
as multisets and so are the third and the fourth columns. 
Thus we
permute the first four columns in 
$\frac{4!}{2!2!}=6$ different ways. The
sixth and seventh columns of $A$ are 
different as 
multisets and so we permute
them in 2 different ways.
In total, we permute the columns of $A$ in
$6 \cdot 2=12$ different ways and then we permute the rows
of each of the resulting matrices in all 48 possible ways to
obtain 
that the number of matrices that
belong to the 
double coset $\Sigma A \Sigma$
is $12 \cdot 48$.

Next we consider the matrix
$$A_1=((123)(67)A)^2=
\begin{pmatrix}
0 & -1 & -1 & -1 & 0 & 1 & 1\\
0 & 0 & -1 & 0 & 0 & 0 & 1\\
1 & 0 & 0 & 0 & 0 & 0 & 0\\
0 & 0 & -1 & 0 & 0 & 1 & 0\\
0 & -1 & -2 & -1 & 1 & 1 & 1\\
0 & 0 & -1 & -1 & 0 & 1 & 1\\
0 & -1 & -1 & 0 & 0 & 1 & 1\\
\end{pmatrix}.$$
We see that
$A_1$ contains an entry of $-2$, which is not the case
with $A$, implying that 
the double cosets $\Sigma A_1 \Sigma$ and
$\Sigma A \Sigma$ are distinct.
All the rows of $A_1$ are distinct
as sequences.
The first, second and third columns of $A_1$ are 
different as multisets and the fourth column represents
the same multiset as the second column. 
The sixth and seventh 
columns of $A_1$ are the same as multisets. 
Thus we permute the columns of $A_1$ in
$\frac{4!}{2!}=12$ different ways 
and then we permute the rows
of each of the resulting matrices in all 
48 possible ways to
obtain
that the number of matrices that
belong to the 
double coset $\Sigma A_1 \Sigma$
is $12 \cdot 48$.

Considering 
the number of matrices that
belong to the 
double cosets $\Sigma A \Sigma$
and $\Sigma A_1 \Sigma$, 
we see that the group $G_L$ contains
at least $12 \cdot 48 + 12 \cdot 48
> 960$ elements. 
Therefore $|G_L|=|W(D_5)|$ and the theorem
is proved.

\end{proof}

As stated before Theorem \ref{T410},
we are interested in the complete
double coset decomposition of $G_L$
with respect to $\Sigma$ since this
will classify all the invariance
relations for the function
$L(a,b,c,d;e;f,g)$ in a convenient way.
We use the same  
technique as in the proof of
Theorem \ref{T410} 
given by permuting columns that are different as
multisets and then permuting the rows of the resulting 
matrices
in every possible way. 
We obtain that there are
six double cosets of $\Sigma$ in $G_L$.
Representative matrices for the double cosets are
$I_7, A, ((123)(67)A)^2, 
((123)(67)A)^3, ((123)A)^3,
((123)(67)A)^4$. The corresponding
double coset sizes are 
$1 \cdot 48, 12 \cdot 48, 12 \cdot 48,
12 \cdot 48, 2 \cdot 48, 1 \cdot 48$.
Furthermore, the representative
matrices are all seen to have different entries (as,
for example, 
we determined for the matrices $A$ and $((123)(67)A)^2$
in the proof of Theorem \ref{410}) so that
$\Sigma$ must indeed be the largest permutation
subgroup of $G_L$. 
Each representative matrix gives rise to
an invariance relation. 
Theorem \ref{T420} summarizes the result.
 
\begin{Theorem}
\label{T420}

Let 
$\Sigma$ be as defined in (\ref{430}). Then
$\Sigma$ consists of all the permutation matrices
in $G_L$. There are six double
cosets in the double coset decomposition
of $G_L$ with respect to $\Sigma$.
Representative matrices for the double cosets
are $I_7, A, ((123)(67)A)^2, 
((123)(67)A)^3, ((123)A)^3,
((123)(67)A)^4$ and the corresponding
double coset sizes are
$1 \cdot 48, 12 \cdot 48, 12 \cdot 48,
12 \cdot 48, 2 \cdot 48, 1 \cdot 48$.
The corresponding invariances of the $L$ function are
given by
\begin{align}
L\left[
{\displaystyle a,b,c,d;
\atop
\displaystyle e;f,g}\right] &=
L\left[
{\displaystyle a,b,c,d;
\atop
\displaystyle e;f,g}\right],
\label{450}\\
L\left[
{\displaystyle a,b,c,d;
\atop
\displaystyle e;f,g}\right] &=
L\left[
{\displaystyle a,b,g-c,g-d;
\atop
\displaystyle 1+a+b-f;1+a+b-e,g}\right],
\label{452}\\
L\left[
{\displaystyle a,b,c,d;
\atop
\displaystyle e;f,g}\right] &=
L\left[
{\displaystyle 1+a-e,g-c,a,f-c;
\atop
\displaystyle 1+a-c;1+a+b-e,1+a+d-e}\right],
\label{454}\\
L\left[
{\displaystyle a,b,c,d;
\atop
\displaystyle e;f,g}\right] &=
L\left[
{\displaystyle 1+d-e,1+a-e,g-c,g-b;
\atop
\displaystyle 1+g-b-c;1+a+d-e,1+g-e}\right],
\label{456}\\
L\left[
{\displaystyle a,b,c,d;
\atop
\displaystyle e;f,g}\right] &=
L\left[
{\displaystyle g-a,g-b,g-c,g-d;
\atop
\displaystyle 1+g-f;1+g-e,g}\right],
\label{458}\\
L\left[
{\displaystyle a,b,c,d;
\atop
\displaystyle e;f,g}\right] &=
L\left[
{\displaystyle 1+c-e,1+d-e,1+a-e,1+b-e;
\atop
\displaystyle 2-e;1+g-e,1+f-e}\right].
\label{460}
\end{align}

\end{Theorem}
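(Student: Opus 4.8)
The plan is to exploit two facts that are already available: first, by Theorem~\ref{T410} we know $|G_L|=1920=40\cdot 48$, so, since $|\Sigma|=48$; and second, every double coset $\Sigma\alpha\Sigma$ has cardinality divisible by $48$, because $\Sigma\alpha\Sigma=\bigcup_{\tau\in\Sigma}\Sigma(\alpha\tau)$ is a disjoint union of sets of the form $\Sigma\beta$ (two such are equal or disjoint), each of size $|\Sigma|=48$. Consequently, to prove the theorem it is enough to exhibit six pairwise distinct double cosets whose sizes add up to $1920$; they will then automatically be all of them. The combinatorial engine throughout will be the observation, already used in the proof of Theorem~\ref{T410}, that the multiset of the $49$ entries of a matrix is constant on each double coset: for permutation matrices $\sigma,\tau$, the product $\sigma\alpha\tau$ is obtained from $\alpha$ by permuting rows and then columns, which merely rearranges entries. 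In particular, matrices with different entry-multisets lie in different double cosets.

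First I would compute the six matrices $R_1=I_7$, $R_2=A$, $R_3=((123)(67)A)^2$, $R_4=((123)(67)A)^3$, $R_5=((123)A)^3$, $R_6=((123)(67)A)^4$ explicitly (a finite matrix computation; $R_2$ and $R_3$ are already displayed in the proof of Theorem~\ref{T410}) and record the entry-multiset of each. One checks these are pairwise distinct --- for example $R_1$ has entries in $\{0,1\}$ only, $R_2$ has $-1$ but no $-2$, $R_3$ contains a $-2$, and so on --- so $R_1,\dots,R_6$ lie in six distinct double cosets of $\Sigma$ in $G_L$.

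Next I would bound $|\Sigma R_i\Sigma|$ from below for each $i$ by the counting argument used for $\Sigma A\Sigma$ and $\Sigma A_1\Sigma$ in the proof of Theorem~\ref{T410}. If a matrix $\alpha$ has pairwise distinct rows (which a direct check confirms for each $R_i$), then the $48$ matrices $\sigma\alpha$, $\sigma\in\Sigma$, are distinct, and the same holds after replacing $\alpha$ by $\alpha\tau$ for any $\tau\in\Sigma$. Moreover, if $\tau\in\Sigma$ changes the sequence of column multiset-types of $\alpha$ (within the first four columns, and within the last two), then no $\Sigma$-multiple of $\alpha\tau$ can equal a $\Sigma$-multiple of $\alpha$, since left multiplication by $\Sigma$ preserves each column's multiset. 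Hence $|\Sigma\alpha\Sigma|\ge 48\,r_\alpha$, where $r_\alpha=\tfrac{4!}{k_1!k_2!\cdots}\cdot\tfrac{2!}{l_1!\cdots}$ counts the within-block rearrangements of the column multiset-types of $\alpha$ (for instance, for $A$ the first four columns have types $t_1,t_1,t_2,t_2$ and the last two have distinct types, giving $6\cdot 2=12$). Reading the column types off $R_1,\dots,R_6$ yields $r_{R_1}=1$, $r_{R_2}=r_{R_3}=r_{R_4}=12$, $r_{R_5}=2$, $r_{R_6}=1$, so $\sum_i |\Sigma R_i\Sigma|\ge 48(1+12+12+12+2+1)=1920=|G_L|$. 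Together with the disjointness established above, this forces equality everywhere: the six $\Sigma R_i\Sigma$ exhaust $G_L$ and have the stated sizes $1\cdot 48,\,12\cdot 48,\,12\cdot 48,\,12\cdot 48,\,2\cdot 48,\,1\cdot 48$.

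It remains to identify $\Sigma$ and to produce the invariance relations. Every permutation matrix has entry-multiset consisting of seven $1$'s and forty-two $0$'s, and among the six representatives only $R_1=I_7$ has this entry-multiset; hence any permutation matrix in $G_L$ lies in $\Sigma I_7\Sigma=\Sigma$, so $\Sigma$ is precisely the set of permutation matrices in $G_L$. Finally, each $R_i\in G_L$ gives $L(\vec x)=L(R_i\vec x)$, because $G_L$ is an invariance group for $L$ by the fundamental relation~(\ref{340}) together with the trivial invariances, as observed earlier in this section; computing $R_i\vec x$ with $\vec x=(a,b,c,d,e,f,g)^T$ and simplifying each coordinate using the Saalsch\"utzian constraint $e+f+g-a-b-c-d=1$ (e.g.\ $-c-d+e+g=1+a+b-f$ and $-c-d+f+g=1+a+b-e$ for $R_2=A$) yields the six relations (\ref{450})--(\ref{460}). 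I expect the only real work to be bookkeeping --- verifying the six matrices, confirming their entry-multisets are genuinely distinct, and checking that the column-type counts $r_{R_i}$ come out as claimed --- rather than anything conceptual, once the "entry-multiset is a double-coset invariant'' and "distinct rows give $48$ distinct left translates'' principles are in place.
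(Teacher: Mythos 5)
Your proposal is correct and follows essentially the same route as the paper: the same six representatives, the same counting of double-coset sizes via row permutations and permutations of columns that differ as multisets, the same entry-multiset invariant to separate the cosets and to conclude that $\Sigma$ is exactly the set of permutation matrices in $G_L$, and the same derivation of (\ref{450})--(\ref{460}) by applying each representative to $\vec{x}$ and simplifying with the Saalsch\"utzian condition. The only (welcome) difference is that you make the completion step explicit---the lower bounds $48\,r_{R_i}$ sum to $1920=|G_L|$, which forces the counts to be exact and the six cosets to exhaust $G_L$---whereas the paper leaves this implicit in its appeal to the technique of Theorem \ref{T410}.
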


\section{Applications of the
fundamental two-term relation}

In this final section we prove
some consequences of the fundamental
two-term relation given in
Proposition \ref{P330}. As a first
step, we write 
the two $L$ functions
in (\ref{340}) in terms of
their definitions 
as linear combinations of two
${}_4F_3(1)$ series. 
We obtain
\begin{eqnarray}
\label{510}
\frac{{}_4F_3\left[
{\displaystyle a,b,c,d;
\atop
\displaystyle e,f,g;}1\right]}
{\sin \pi e\ \Gamma(e)\Gamma(f)
\Gamma(g)\Gamma(1+a-e)
\Gamma(1+b-e)\Gamma(1+c-e)
\Gamma(1+d-e)}\nonumber\\
-\frac{{}_4F_3\left[
{\displaystyle 1+a-e,1+b-e,
1+c-e,1+d-e;
\atop
\displaystyle 1+f-e,
1+g-e,2-e;}1\right]}
{\sin \pi e\ \Gamma(a)\Gamma(b)
\Gamma(c)\Gamma(d)\Gamma(1+f-e)
\Gamma(1+g-e)\Gamma(2-e)}\nonumber\\
=\frac{{}_4F_3\left[
{\displaystyle a,b,g-c,g-d;
\atop
\displaystyle 1+a+b-f,
1+a+b-e,g;}1\right]}
{\left[
{\displaystyle \sin \pi (1+a+b-f) 
\Gamma(1+a+b-f)
\Gamma(1+a+b-e)\Gamma(g)
\atop
\displaystyle \cdot
\Gamma(f-b)\Gamma(f-a)
\Gamma(1+d-e)\Gamma(1+c-e)
}\right]
}\nonumber\\
-\frac{{}_4F_3\left[
{\displaystyle f-b, f-a, 1+d-e, 1+c-e;
\atop
\displaystyle 1+f-e,f+g-a-b,1+f-a-b}1
\right]}
{\left[
{\displaystyle \sin \pi (1+a+b-f) 
\Gamma(a)\Gamma(b)
\Gamma(g-c)\Gamma(g-d)
\atop
\displaystyle \cdot
\Gamma(1+f-e)
\Gamma(f+g-a-b)\Gamma(1+f-a-b)
}\right]
}.
\end{eqnarray}

We fix $b,c,d,f,g \in
\mathbb{C}$ in such a way that
\begin{equation}
\label{520}
\mbox{Re}(f+g-b-c-d)>0, \quad
\mbox{Re}(f-b)>0.
\end{equation}
Let $a \in \mathbb{C}$ and 
let $e=1+a+b+c+d-f-g$ depend
on $a$. In equation 
(\ref{510}) we let
$|a| \to \infty$. Using Stirling's 
formula (\ref{250}) and the
conditions (\ref{520}), we obtain
\begin{eqnarray}
\label{530}
\frac{{}_3F_2\left[
{\displaystyle b,c,d;
\atop
\displaystyle f,g;}1\right]}
{\Gamma(f)\Gamma(g)
\Gamma(f+g-b-c-d)}
\nonumber\\
=\frac{{}_3F_2\left[
{\displaystyle b,g-c,g-d;
\atop
\displaystyle f+g-c-d,g;}
1\right]}
{\Gamma(f+g-c-d)\Gamma(g)
\Gamma(f-b)}.
\end{eqnarray}
We note that the conditions
(\ref{520}) are needed for
the absolute convergence of
the two ${}_3F_2(1)$ series
in (\ref{530}).
Applying
(\ref{530}) twice yields 
Thomae's identity
\begin{eqnarray}
\label{540}
\frac{{}_3F_2\left[
{\displaystyle b,c,d;
\atop
\displaystyle f,g;}1\right]}
{\Gamma(f)\Gamma(g)
\Gamma(f+g-b-c-d)}
\nonumber\\
=\frac{{}_3F_2\left[
{\displaystyle f-b,g-b,f+g-b-c-d;
\atop
\displaystyle f+g-b-d,f+g-b-c;}
1\right]}
{\Gamma(b)\Gamma(f+g-b-d)
\Gamma(f+g-b-c)}.
\end{eqnarray}
In fact, applying
(\ref{540}) twice gives
(\ref{530}), so that
(\ref{530}) and (\ref{540})
are equivalent.

Next in equation (\ref{510}) we let
$a \to -n$, where $n$ is a
nonnegative integer. Using the fact
that $\lim_{a \to -n}\frac{1}{\Gamma(a)}
=0$ and then formula (\ref{270}) to
simplify the result, we obtain
Bailey's identity
\begin{align}
\label{550}
&{}_4F_3\left[
{\displaystyle -n,b,c,d;
\atop
\displaystyle e,f,g;}
1\right]\nonumber\\
=\frac{(e-b)_n(f-b)_n}
{(e)_n(f)_n}
&{}_4F_3\left[
{\displaystyle -n,b,g-c,g-d;
\atop
\displaystyle 1-n+b-f,1-n+b-e,g;}
1\right], 
\end{align}
which holds provided that
$e+f+g-b-c-d+n=1$.

Thomae's and Bailey's identities
have been shown 
in \cite{FGS}
in a similar way
to be limiting cases of a fundamental
two-term relation 
satisfied by the
function $K(a;b,c,d;e,f,g)$. 

As a final application, in the 
fundamental two-term relation
(\ref{340}) we let $d=g$.
We express the left-hand side
as a Barnes integral 
according to Proposition \ref{310},
and we write
the right-hand side in terms 
of two ${}_4F_3(1)$ series
according to the 
definition
of the $L$ function.
The condition $d=g$ causes
one of the terms on the right-hand 
side to go to zero and the
${}_4F_3(1)$ series in the other
term to be trivially equal to one.
If we simplify the result further using
(\ref{270}),
we obtain
\begin{eqnarray}
\label{560}
\frac{1}{2 \pi i}
\int_t\frac{\Gamma(a+t)\Gamma(b+t)
\Gamma(c+t)\Gamma(1-e-t)\Gamma(-t)}
{\Gamma(f+t)}\,dt\nonumber\\
=\frac{\Gamma(a)\Gamma(b)\Gamma(c)
\Gamma(1+a-e)\Gamma(1+b-e)\Gamma(1+c-e)}
{\Gamma(f-a)\Gamma(f-b)\Gamma(f-c)},
\end{eqnarray}
which holds provided that 
$e+f-a-b-c=1$. The equation (\ref{560})
is precisely 
the statement of Barnes' second lemma.

\end{document}